\journalname{JOTA}
\newcommand{\menge}[2]{\big\{{#1} : {#2}\big\}}
\newcommand{\emp}{\ensuremath{{\varnothing}}}
\newcommand{\scal}[2]{\left\langle{#1}, {#2} \right\rangle}
\newcommand{\HH}{\ensuremath{\mathcal H}}
\newcommand{\RR}{\ensuremath{\mathbb R}}
\newcommand{\NN}{\ensuremath{\mathbb N}}
\newcommand{\dom}{\ensuremath{\operatorname{dom}}}
\newcommand{\gr}{\ensuremath{\operatorname{gra}}}
\newcommand{\ran}{\ensuremath{\operatorname{ran}}}
\newcommand{\zer}{\ensuremath{\operatorname{zer}}}
\newcommand{\gra}{\ensuremath{\operatorname{gra}}}
\newcommand{\Fix}{\ensuremath{\operatorname{Fix}}}
\newcommand{\Id}{\ensuremath{\operatorname{Id}}}
\newcommand{\weakly}{\ensuremath{\rightharpoonup}}
\begin{document}

\title{Finding the Forward-Douglas-Rachford-Forward Method
\thanks{Communicated by Jalal Fadili.}}
\author{Ernest K. Ryu \and B$\grave{\text{\u{a}}}$ng C\^ong V\~u}

\institute{Ernest K. Ryu, Corresponding author \at
             University of California, Los Angeles\\
              California, United States\\
		eryu@math.ucla.edu
           \and
           B$\grave{\text{\u{a}}}$ng C\^ong V\~u,    \at
           Department of Mathematics, Vietnam National University\\
              Hanoi, Vietnam \\
              bangcvvn@gmail.com
}

\date{
Received: date / Accepted: date}

\maketitle

\begin{abstract}
We consider the monotone inclusion problem with a sum of 3 operators, in which 2 are monotone and 1 is monotone-Lipschitz. The classical Douglas--Rachford and Forward-backward-forward methods respectively solve the monotone inclusion problem with a sum of 2 monotone operators and a sum of 1 monotone and 1 monotone-Lipschitz operators. We first present a method that naturally combines Douglas--Rachford and Forward-backward-forward and show that it solves the 3 operator problem under further assumptions, but fails in general. We then present a method that naturally combines Douglas--Rachford and forward-reflected-backward, a recently proposed alternative to Forward-backward-forward by Malitsky and Tam [arXiv:1808.04162, 2018]. We show that this second method solves the 3 operator problem generally, without further assumptions.
\end{abstract}
\keywords{Douglas--Rachford \and Forward-backward-forward \and Forward-reflected-backward \and Monotone inclusion}
\subclass{47H05 \and 47H09 \and 90C25}



\section{Introduction}
\label{s:intro}
We consider the monotone inclusion problem of finding a zero of the sum of 2 maximal monotone and 1 monotone-Lipschitz operators. 
The classical Douglas--Rachford (DR) splitting by Lions and Mercier \cite{Lions1979} solves the problem with a sum of 2 maximal monotone operators.
The classical forward-backward-forward (FBF) splitting by Tseng \cite{Tseng00} solves the problem with a sum of 1 maximal monotone and 1 monotone-Lipschitz operators.
We consider the generalization of the setups of DR and FBF.

Recently, there has been much work developing splitting methods combining and unifying classical ones.
Another classical method is forward-backward (FB) splitting \cite{BRUCK1977159,Passty1979}, which solves the problem with a sum of 1 monotone and 1 cocoercive operators.
The effort of combining DR and FB was started by Raguet, Fadili, and Peyr\'e \cite{Raguet11,Raguet18}, extended by Brice\~no-Arias \cite{Luis15}, and completed by Davis and Yin \cite{Davis} as they proved convergence for the sum of 2 monotone and 1 cocoercive operators.
FB and FBF were combined by Brice\~no-Arias and Davis \cite{Luis16} as they proved convergence for 1 monotone, 1 cocoercive, and 1 monotone-Lipschitz operators.
These combined splitting methods can efficiently solve monotone inclusion problems with more complex structure.

On the other hand, DR and FBF have not been fully combined, to the best of our knowledge.
Banert's relaxed forward backward (in the thesis \cite{Banert12})
and 
Brice\~no-Arias's forward--partial inverse--forward \cite{Luis15JOTA}
 combine DR and FBF in the setup where one operator is a normal cone operator with respect to a closed subspace.
However, neither method applies to the general setup with 2 maximal monotone and 1 monotone-Lipschitz operators.

In this work, we first present a method that naturally combines and unifies DR and FBF. We prove convergence under further assumptions, and we prove, through a counterexample, that convergence cannot be established in full generality.
We then propose a second method that naturally combines and unifies DR and forward-reflected-backward (FRB), a recently proposed alternative to FBF by Malitsky and Tam \cite{Malitsky18}.
We show that this combination of DR and FRB does converge in full generality.

The paper is organized as follows.
Section~\ref{s:problem_statement} states the problem formally.
Section~\ref{s:prelim} reviews preliminary information and sets up the notation.
Section~\ref{s:algo} presents our first proposed method combining DR and FBF, proves convergence under certain further assumptions, and proves divergence in the fully general case.
Section~\ref{s:FRDR} presents our second proposed method combining DR and FRB and proves convergence in the fully general case.
Section~\ref{s:comparison} compares our presented method with other similar and relevant methods.

\section{Problem Statement, Contribution, and Prior Work}
\label{s:problem_statement}
Consider the monotone inclusion problem
\begin{equation}
\mbox{find $x\in \HH$ \quad such that \quad }0\in Ax+Bx+Cx,
\label{e:prob1}
\end{equation}
where $\HH$ is a real Hilbert space.
Throughout, we assume for some $\mu\in]0,\infty[$:
\makeatletter
\tagsleft@true
\makeatother
\begin{gather}
\text{$A\colon\HH\rightrightarrows \HH$ and $B\colon\HH\rightrightarrows \HH$ are maximal monotone.}
\tag{A1}\label{assump:a1}\\
\text{$C\colon\HH\rightarrow\HH$ is monotone and $\mu$-Lipschitz continuous.}
\tag{A2}\label{assump:a2}\\
\text{$\zer(A+B+C)$ is not empty.}
\tag{A3}\label{assump:a3}
\end{gather}
\makeatletter
\tagsleft@false
\makeatother

Let $J_{\gamma A}$, $J_{\gamma B}$, and $J_{\gamma C}$, respectively, denote the resolvents with respect to $A$, $B$, and $C$ with parameters $\gamma$.
 We informally assume $J_{\gamma A}(x)$, $J_{\gamma B}(x)$, and $C(x)$  can be evaluated efficiently for any input $x\in \HH$.
 However, $J_{\gamma C}(x)$ may be difficult to evaluate.
Therefore, we restrict our attention to methods that  activate $C$ through direct evaluations, rather than through $J_{\gamma C}$. 
The monotone-Lipschitz operator $C$ of Problem~\eqref{e:prob1} arises 
as skew linear operators primal-dual optimization \cite{plc6,Combettes13}
and saddle point problems \cite{Rockafellar1970_saddle}.


When $C=0$, we can use the classical Douglas--Rachford (DR) splitting presented by Lions and Mercier \cite{Lions1979}:
\begin{align*}
\begin{cases}
x_{n+1}=J_{\gamma B}(z_{n})\\
y_{n+1}=J_{\gamma A}(2x_{n+1}-z_{n})\\
z_{n+1}=z_{n}+y_{n+1}-x_{n+1},
\end{cases}
\end{align*}
where the step size parameter satisfies $\gamma \in \left]0,+\infty\right[$.
When $B=0$, we can use the classical forward-backward-forward (FBF) splitting by Tseng \cite{Tseng00}:
\begin{align*}
\begin{cases}
y_{n+1}=J_{\gamma A}(x_{n}-\gamma Cx_{n})\\
x_{n+1}=y_{n+1} -\gamma (Cy_{n+1}-Cx_{n}),
\end{cases}
\end{align*}
 where the step size parameter satisfies $\gamma \in \left]0,1/\mu \right[$.
Recently, Malitsky and Tam \cite{Malitsky18} have proposed
forward-reflected-backward (FRB) splitting, another method for the case $B=0$:
\begin{align*}
\begin{cases}
x_{n+1}=J_{\gamma A}(x_{n}-\gamma (2Cx_{n}-Cx_{n-1})),
\end{cases}
\end{align*}
 where the step size parameter satisfies $\gamma \in\left]0,1/(2\mu)\right[$.

The contribution of this work is the study of splitting methods combining DR with other methods to incorporate an additional monotone-Lipschitz operator.
We characterize to what extent DR+FBF works and to what extent it fails. We then demonstrate that DR+FRB is a successful combination.



Several other 3-operator splitting methods
have been presented in recent years.
Combettes and Pesquet's PPXA \cite{plc6},
Bo{\c t}--Hendrich \cite{Bot2013},
Latafat and Patrinos's AFBA \cite{Latafat2018}, and 
Ryu's 3-operator resolvent-splitting \cite{Ryu2018}
solve the problem with 3 or more monotone operators by activating the operators through their individual resolvents.
Condat--V\~u \cite{Condat2013,Vu2013}, FDR \cite{Raguet11,Raguet18,Luis15,Davis}, and Yan's PD3O \cite{Yan2018}
solve the problem with 2 monotone and 1 cocoercive operators by activating the 2 monotone operators through their resolvents and the cocoercive operator through forward evaluations.
FBHF \cite{Luis16} solves the problem with 1 monotone, 1 cocoercive, and 1 monotone-Lipschitz operators by activating the monotone operator through its resolvent and the cocoercive and monotone-Lipschitz operators through forward evaluations.
These methods do not apply to our setup since we have 2 monotone operators, which we activate through their resolvents, and 1 monotone-Lipschitz operator, which we activate through forward evaluations.

The primal-dual method by Combettes and Pesquet \cite{plc6} and the instances of projective splitting by Johnstone and Eckstein \cite{johnstone2018,johnstone2018b} are existing methods that do solve Problem~\eqref{e:prob1}.
However, these methods do not reduce to DR. We compare the form of these methods in Section~\ref{s:comparison}.

While this paper was under review, there have been exciting developments on splitting methods based on cutting planes (separating hyperplanes): Warped proximal iterations by B\`ui and Combettes \cite{combettes2019} and NOFOB by Giselsson \cite{giselsson2019} are general frameworks that can solve Problem~\eqref{e:prob1}. The methods that arise from these frameworks are different from the methods we present.




\section{Preliminaries}
\label{s:prelim}
In this section, we quickly review known results and set up the notation.
The notation and results we discuss are standard, and interested readers can find further information in \cite{livre1,primer}.

Write $\HH$ for a real Hilbert space and, respectively, write $\scal{\cdot}{\cdot}$ and $\|\cdot\|$ for its associated scalar product and norm.
Write $\Id\colon\HH\to\HH$ for the identity operator.
Write $A\colon\HH\rightrightarrows \HH$ to denote that $A$ is a set-valued operator.
For simplicity, we also write $Ax := A(x)$.
When $A$ maps a point to a singleton, we also write $Ax = y$ instead of $Ax = \{y\}$.
Write $\dom(A):=\menge{x\in\HH}{Ax\not=\emp}$ for the domain of $A$
and $\ran(A) := \menge{u\in\HH}{(\exists x\in\HH)\, u\in Ax }$ for the range of $A$.
Write $\gra(A) := \menge{(x,u)\in\HH\times\HH}{u\in Ax}$ for the graph of $A$.
The inverse of $A$ is the set-valued operator defined by $A^{-1}\colon u \mapsto \menge{x}{ u\in Ax}$. 
The zero set of $A$ is $\zer(A) := A^{-1}0$.
We say that $A$ is  monotone if 
\begin{equation*}\label{oioi}
\big(\forall (x,u), (y,v)\in\gra A\big)
\quad\scal{x-y}{u-v}\geq 0,
\end{equation*}
and it is maximally monotone if there exists no monotone operator $B$ such that $\gra(B)$ properly contains $\gra(A)$.
The resolvent of $A$ is
$ J_A:=(\Id + A)^{-1}$.
When $A$ is maximal monotone, $J_A$ is single-valued and $\dom J_A=\HH$.
A single-valued operator $B\colon\HH\to\HH$ is $\kappa$-cocoercive for $\kappa\in ]0,\infty[$ if
\begin{equation*}
(\forall x,y\in\HH)\quad \scal{x-y}{Bx-By} \geq \kappa\|Bx-By\|^2.
\end{equation*}  
A single-valued operator $C\colon\HH\rightarrow \HH$ is $\mu$-Lipschitz for $\mu\in]0,\infty[$ if
\[
\left(\forall x,y\in \HH\right)
\quad
\|Cx-Cy\|\le\mu \|x-y\|.
\]
A single-valued operator $R\colon\HH\rightarrow \HH$ is nonexpansive if
\[
\left(\forall x,y\in \HH\right)
\quad
\|Rx-Ry\|\le \|x-y\|,
\]
i.e., if $R$ is $1$-Lipschitz. Let $\theta\in \left[0,1\right]$.
A single-valued operator $T\colon\HH\rightarrow \HH$ is $\theta$-averaged if
$T=(1-\theta)I+\theta R$ for some nonexpansive operator $R$.
Define the normal cone operator with respect to a nonempty closed convex set $C\subseteq \HH$ as
\[
N_C(x) = \left\{ \begin{array}{ll}
\emp, & \text{if}\; x \not\in C\\
\menge{ y\in\HH}{\scal{y}{z-x}\leq 0~\forall z\in C}, &  \text{if}\;  x \in C.
\end{array}\right.
\]

The Cauchy--Schwartz inequality states
$\scal{u}{v}\le \|u\|\|v\|$
for any $u,v\in \HH$.
The Young's inequality states
\[
\scal{u}{v}\le \frac{\eta}{2}\|u\|^2+\frac{1}{2\eta}\|v\|^2,
\]
for any $u,v\in \HH$ and $\eta>0$.

\begin{lemma} \label{l:21}
If $C\colon\HH\to\HH$ is $\mu$-Lipschitz continuous, 
then $\Id-\gamma C$ is one-to-one for $\gamma\in]0,1/\mu[$.
\end{lemma}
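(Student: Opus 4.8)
The plan is to argue directly from the definition of injectivity. Suppose $x,y\in\HH$ satisfy $(\Id-\gamma C)x=(\Id-\gamma C)y$; the goal is to conclude $x=y$. Rearranging this identity gives $x-y=\gamma(Cx-Cy)$.

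Next I would take norms on both sides and invoke the $\mu$-Lipschitz continuity of $C$ (Assumption~\eqref{assump:a2}-type property, here hypothesized) to obtain
\[
\|x-y\|=\gamma\|Cx-Cy\|\le\gamma\mu\|x-y\|.
\]
Since $\gamma\in]0,1/\mu[$, we have $\gamma\mu<1$, so the inequality $\|x-y\|\le\gamma\mu\|x-y\|$ forces $(1-\gamma\mu)\|x-y\|\le 0$, hence $\|x-y\|=0$, i.e.\ $x=y$. This establishes that $\Id-\gamma C$ is one-to-one.

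There is essentially no obstacle here: the argument is a one-line contraction-type estimate, and the only point to be careful about is that the strict inequality $\gamma\mu<1$ is what makes the coefficient $1-\gamma\mu$ strictly positive, which is exactly where the hypothesis $\gamma<1/\mu$ (rather than $\gamma\le 1/\mu$) is used.
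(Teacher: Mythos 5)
Your proof is correct and uses essentially the same argument as the paper: the paper establishes the lower bound $\|(\Id-\gamma C)x-(\Id-\gamma C)y\|\ge(1-\gamma\mu)\|x-y\|$ via the reverse triangle inequality and Lipschitz continuity, while you start from the assumed equality of images and derive $\|x-y\|\le\gamma\mu\|x-y\|$, which is the same estimate read in the other direction. Both hinge on $\gamma\mu<1$ making the coefficient $1-\gamma\mu$ strictly positive, so no further comment is needed.
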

\begin{proof}
Although this result follows immediately from the machinery of scaled relative graphs \cite{Ryu2019}, we provide a proof based on first principles.
Let $x,y\in\HH$. Then $ \|Cx-Cy\| \leq \mu \|x-y\|$ and
\begin{align*}
\|(\Id-\gamma C)x-(\Id-\gamma C)y\|
\ge  \|x-y\| -\gamma\|Cx-Cy\|
\ge(1-\gamma \mu)\|x-y\|,
\end{align*}
by Cauchy--Schwartz and $\mu$-Lipschitz continuity.
Thus $(\Id-\gamma C)x=(\Id-\gamma C)y$ if and only if $x=y$.
\qed\end{proof}

A classical result states that $J_{B}$ is $(1/2)$-averaged if $B$ is maximal monotone \cite[Proposition 23.8]{livre1}.
The following lemma states that $J_{B}$ is furthermore $\theta$-averaged with $\theta<1/2$ if $B$ is cocoercive.
\begin{lemma} 
\label{l:2}
Let $\gamma,\kappa\in]0,\infty[$.
If $B:\HH\rightarrow\HH$ is $\kappa$-cocoercive, then $J_{\gamma B}$ is $\frac{1}{2(1+\kappa/\gamma)}$-averaged
and 
\[
\|J_{\gamma B}x-J_{\gamma B}y\|^2\le
\|x-y \|^2   - \left(1 + \frac{2\kappa}{\gamma}\right) \|(\Id-J_{\gamma B})x- (\Id-J_{\gamma B})y \|^2.
\]
for any $x,y\in\HH$.
\end{lemma}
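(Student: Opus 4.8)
The plan is to reduce both assertions to the cocoercivity inequality for the scaled operator $\gamma B$ and then read off the conclusions. First I would record that $\gamma B$ is $(\kappa/\gamma)$-cocoercive: for all $x,y\in\HH$,
\[
\scal{x-y}{\gamma Bx-\gamma By}=\gamma\scal{x-y}{Bx-By}\ge\gamma\kappa\|Bx-By\|^2=\frac{\kappa}{\gamma}\|\gamma Bx-\gamma By\|^2 .
\]
Since a cocoercive operator is $(1/\kappa)$-Lipschitz, hence continuous, monotone, and everywhere defined, $\gamma B$ is maximal monotone, so $J_{\gamma B}=(\Id+\gamma B)^{-1}$ is single-valued with $\dom J_{\gamma B}=\HH$. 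Put $\beta:=\kappa/\gamma$, and for fixed $x,y\in\HH$ set $p:=J_{\gamma B}x$, $q:=J_{\gamma B}y$, $u:=x-p=(\Id-J_{\gamma B})x$, $v:=y-q=(\Id-J_{\gamma B})y$. By definition of the resolvent $u=\gamma Bp$ and $v=\gamma Bq$, so applying $\beta$-cocoercivity of $\gamma B$ at the points $p$ and $q$ gives $\scal{p-q}{u-v}\ge\beta\|u-v\|^2$. Writing $p-q=(x-y)-(u-v)$ and rearranging yields
\[
\scal{x-y}{u-v}\ge(1+\beta)\|u-v\|^2 .
\]

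Next I would expand
\[
\|J_{\gamma B}x-J_{\gamma B}y\|^2=\|(x-y)-(u-v)\|^2=\|x-y\|^2-2\scal{x-y}{u-v}+\|u-v\|^2
\]
and substitute the bound just obtained to get
\[
\|J_{\gamma B}x-J_{\gamma B}y\|^2\le\|x-y\|^2-(1+2\beta)\|u-v\|^2 ,
\]
which is exactly the displayed inequality, since $\beta=\kappa/\gamma$ and $u-v=(\Id-J_{\gamma B})x-(\Id-J_{\gamma B})y$.

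Finally I would deduce averagedness from this inequality. Set $\theta:=\frac{1}{2(1+\beta)}=\frac{1}{2(1+\kappa/\gamma)}$, so that $\frac{1-\theta}{\theta}=1+2\beta$, and let $R:=\theta^{-1}\big(J_{\gamma B}-(1-\theta)\Id\big)$, so that $J_{\gamma B}=(1-\theta)\Id+\theta R$ by construction. A direct expansion of this identity gives, for all $x,y\in\HH$,
\[
\theta\|Rx-Ry\|^2=\|J_{\gamma B}x-J_{\gamma B}y\|^2+\frac{1-\theta}{\theta}\|(\Id-J_{\gamma B})x-(\Id-J_{\gamma B})y\|^2-(1-\theta)\|x-y\|^2 ,
\]
and the inequality of the previous paragraph forces the right-hand side to be at most $\theta\|x-y\|^2$; hence $R$ is nonexpansive and $J_{\gamma B}$ is $\theta$-averaged. (Alternatively, one may invoke the standard characterization of $\theta$-averaged operators in \cite{livre1}.) I do not anticipate a genuine obstacle; the only points needing care are keeping the bookkeeping between $B$, $\gamma B$, and their resolvents straight, and quoting the precise form of the averagedness characterization if one prefers to cite it rather than verify it directly.
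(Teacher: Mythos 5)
Your proof is correct and follows essentially the same route as the paper: apply the cocoercivity inequality at the resolvent points $J_{\gamma B}x$, $J_{\gamma B}y$ and expand the square to obtain the displayed estimate. The only difference is that for the averagedness claim the paper simply cites \cite[Proposition 4.35]{livre1}, whereas you additionally verify that characterization directly by constructing $R$; both are fine.
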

\begin{proof}
Although this result follows immediately from the machinery of scaled relative graphs \cite{Ryu2019}, we provide a proof based on first principles.
Let $u=J_{\gamma B}x$ and $v=J_{\gamma B}y$, i.e., $\gamma^{-1}(x-u)=  Bu$ and $\gamma^{-1}(y-v)= Bv$.
Since $B$ is $\kappa$-cocoercive, we have
\[
\kappa \|\gamma^{-1}(x-u) -\gamma^{-1}(y-v)  \|^2 \leq \scal{u-v}{\gamma^{-1}(x-u) -\gamma^{-1}(y-v)}.
\]
This implies
\begin{align*}
(\kappa/\gamma) \|x-u -y+v  \|^2 &\leq  \scal{u-v}{x-u -y+v}\notag\\
&= \scal{u-v}{x-y} - \|u-v\|^2\notag\\
&= -\frac{1}{2} \|u-v\|^2 +\frac{1}{2} \|x-y\|^2 - \frac{1}{2}\|x-u -y+v \|^2,
\end{align*}
which proves the stated inequality.
Finally, this inequality is equivalent to $\frac{1}{2(1+\kappa/\gamma)}$-averagedness of $J_{\gamma B}$ by \cite[Proposition 4.35]{livre1}.
\qed\end{proof}

\section{FBF+DR: Convergent with Further Assumptions}
\label{s:algo}
To solve Problem~\eqref{e:prob1}, we propose the following iteration
\makeatletter
\tagsleft@true
\makeatother
\begin{align}
\qquad\qquad\qquad
\begin{cases}
x_{n+1}= J_{\gamma B}z_n\\
y_{n+1}= J_{\gamma A}(2x_{n+1}- z_n -\gamma Cx_{n+1})\\
z_{n+1}= z_n+y_{n+1}-x_{n+1}-\gamma (Cy_{n+1}-Cx_{n+1})
\end{cases}
\tag{FDRF}
\label{e:FDRF}
\end{align}
\makeatletter
\tagsleft@false
\makeatother
for $n=0,1,\dots$
where $z_0\in\HH$ is a starting point and $\gamma>0$.
We call this method forward-Douglas--Rachford-forward (FDRF) splitting as it combines Tseng's FBF \cite{Tseng00} and Douglas--Rachford \cite{Lions1979};
FDRF reduces to FBF when $B=0$ and to DR when $C=0$.



We can view FDRF as a fixed-point iteration $z_{n+1}=Tz_n$ with
\[
T:= (\Id-\gamma C) J_{\gamma A}(2J_{\gamma B} -\Id- \gamma C J_{\gamma B}) +\Id - (\Id-\gamma C) J_{\gamma B}.
\]
The following result states that $T$ is a fixed-point encoding for Problem~\eqref{e:prob1}.
\begin{lemma}\label{lem:fixed-piont}
Assume \eqref{assump:a1} and \eqref{assump:a2}.
If $\gamma\in]0,1/\mu[$, then
\[
\zer(A+B+C) = J_{\gamma B}(\Fix(T)),
\]
where $\Fix(T):=\menge{x\in\HH}{Tx=x}$.
\end{lemma}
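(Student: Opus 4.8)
The plan is to prove the set equality by a double inclusion, exploiting the definition of the resolvents to convert fixed-point conditions into inclusions involving $A$, $B$, and $C$. The natural bookkeeping is to introduce auxiliary points: given $z\in\Fix(T)$, set $x=J_{\gamma B}z$ and $y=J_{\gamma A}(2x-z-\gamma Cx)$, so that by the very definition of the resolvent we have $\gamma^{-1}(z-x)\in Bx$ and $\gamma^{-1}(x-z-\gamma Cx - (y-x))\in Ay$, i.e. $\gamma^{-1}(2x-z-\gamma Cx - y)\in Ay$. The fixed-point equation $Tz=z$ unwinds (using $T z = z + (\Id-\gamma C)(y-x) - \gamma C x + \gamma C x$; more precisely $Tz = z + y - x - \gamma(Cy - Cx)$) to the single scalar/vector identity $y - x - \gamma(Cy-Cx) = 0$, that is $(\Id-\gamma C)y = (\Id-\gamma C)x$. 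By Lemma~\ref{l:21}, since $\gamma\in]0,1/\mu[$, the map $\Id-\gamma C$ is one-to-one, hence $y=x$. Substituting $y=x$ into the two membership relations gives $\gamma^{-1}(z-x)\in Bx$ and $\gamma^{-1}(x-z)-Cx\in Ax$; adding these and adding $Cx$ to both sides yields $0\in Ax+Bx+Cx$, so $x=J_{\gamma B}z\in\zer(A+B+C)$. This establishes $J_{\gamma B}(\Fix(T))\subseteq\zer(A+B+C)$.

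For the reverse inclusion, take $x\in\zer(A+B+C)$, so there exist $a\in Ax$, $b\in Bx$ with $a+b+Cx=0$. The goal is to produce $z\in\Fix(T)$ with $J_{\gamma B}z = x$; the candidate is $z := x+\gamma b$, since then $z\in x+\gamma Bx = (\Id+\gamma B)x$, which gives $J_{\gamma B}z = x$ (using that $J_{\gamma B}$ is single-valued by maximal monotonicity of $B$). With this choice one computes $2x - z - \gamma Cx = x - \gamma b - \gamma Cx = x + \gamma a$ (using $a = -b-Cx$), so $x + \gamma a \in (\Id+\gamma A)x$ and therefore $J_{\gamma A}(2x-z-\gamma Cx) = x$. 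Plugging $x_{n+1}=x$ and $y_{n+1}=x$ into the formula for $T$ (equivalently into the third line of \eqref{e:FDRF}) gives $Tz = z + x - x - \gamma(Cx - Cx) = z$, so $z\in\Fix(T)$ and $x = J_{\gamma B}z\in J_{\gamma B}(\Fix(T))$.

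The only place where the hypothesis $\gamma<1/\mu$ is genuinely used is the injectivity step via Lemma~\ref{l:21}; everything else is formal resolvent algebra. I therefore expect the main (mild) obstacle to be purely clerical: carefully verifying that the fixed-point equation $Tz=z$ is exactly equivalent to $(\Id-\gamma C)y=(\Id-\gamma C)x$ with $x,y$ defined as above — i.e. checking that no spurious term survives when one substitutes the resolvent definitions into $T:= (\Id-\gamma C) J_{\gamma A}(2J_{\gamma B} -\Id- \gamma C J_{\gamma B}) +\Id - (\Id-\gamma C) J_{\gamma B}$. Once that identity is in hand, both inclusions follow by adding the two monotone inclusions coming from the resolvents, and the proof is complete.
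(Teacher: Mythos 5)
Your proof is correct and follows essentially the same route as the paper's: the reverse inclusion constructs $z=x+\gamma b$ exactly as the paper constructs $z=x+\gamma v$, and the forward inclusion uses the injectivity of $\Id-\gamma C$ from Lemma~\ref{l:21} to force $y=x$ before adding the two resolvent inclusions, just as the paper does. The only difference is that you spell out the identity $Tz=z+y-x-\gamma(Cy-Cx)$ explicitly, which the paper leaves implicit.
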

\begin{proof}
Let $x\in\zer(A+B+C)$. Then, there exists $u\in Ax$ and $v\in Bx$ such that $0= u+v+Cx$.
It follows from $v\in Bx$ that $x = J_{\gamma B}z$ where $z= x+\gamma v$. 
We have $2 J_{\gamma B}z-z-\gamma CJ_{\gamma B}z= 2x-z-\gamma Cx = x+\gamma u\in (\Id+\gamma A)x$ and  
$x= J_{\gamma A}(2 J_{\gamma B}z-z-\gamma CJ_{\gamma B}z)$. Therefore,
\[
(\Id-\gamma C)J_{\gamma B}z = (\Id-\gamma C) J_{\gamma A}(2 J_{\gamma B}z-z-\gamma CJ_{\gamma B}z),
\]
which shows that $Tz=z$ and $\zer(A+B+C) \subset J_{\gamma B}(\Fix(T))$. Now, let $z\in \Fix(T)$. By Lemma \ref{l:21},
we have $J_{\gamma B}z=J_{\gamma A}(2 J_{\gamma B}z-z-\gamma CJ_{\gamma B}z)$.
Set $x= J_{\gamma B}z$. Then, $z-x\in \gamma Bx$ and $(2x-z-\gamma Cx)-x \in \gamma Ax$. Therefore, 
$0\in Ax +Bx+Cx$ and hence $J_{\gamma B}(\Fix(T))\subset \zer(A+B+C)$.
\qed\end{proof}



Under further assumptions, FDRF's $(x_n)_{n\in\NN}$ sequence converges weakly to a solution of \eqref{e:prob1}.
\begin{theorem} \label{t:1}
Assume \eqref{assump:a1}, \eqref{assump:a2}, and \eqref{assump:a3}.
If furthermore one of the following conditions holds
\begin{itemize}
    \item[(i)]  $B$ is $\kappa$-cocoercive and $\gamma\in \big]0,\mu^{-1}/\sqrt{1 +\gamma/(2\kappa))}\big[$, which is satisfied, for example, if $0 < \gamma < \min\{\kappa, \mu^{-1}\sqrt{2/3}\}$. 
    \item[(ii)] $B= N_{V}$ and $C = P_VC_1P_V$ for some  closed vector space $V$ and single-valued operator $C_1\colon\HH\to\HH$ and $\gamma\in]0,1/\mu[$,
\end{itemize}
then $z_n\weakly z_\star \in\Fix(T)$ and $x_n\weakly J_{\gamma B}z_\star\in \zer(A+B+C)$ and $y_n\weakly J_{\gamma B}z_\star$ for \eqref{e:FDRF}. 
\end{theorem}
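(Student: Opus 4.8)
## Proof Strategy for Theorem \ref{t:1}

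The plan is to treat \eqref{e:FDRF} as the fixed-point iteration $z_{n+1}=Tz_n$ for the operator $T$ displayed above. By \eqref{assump:a3} and Lemma~\ref{lem:fixed-piont} (which applies since $\gamma<1/\mu$ under either \textup{(i)} or \textup{(ii)}) we have $\Fix(T)\neq\emp$ and $J_{\gamma B}(\Fix(T))=\zer(A+B+C)$. The strategy is: (1) show that $T$ is $\theta$-averaged for some $\theta\in[0,1[$; (2) deduce $z_n\weakly z_\star\in\Fix(T)$ together with asymptotic regularity; (3) identify the weak limits of $(x_n)$ and $(y_n)$ as $J_{\gamma B}z_\star$, which then solves \eqref{e:prob1} by Lemma~\ref{lem:fixed-piont}.

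\textbf{Step 1 (averagedness of $T$ --- the crux).} Fix $z_\star\in\Fix(T)$, set $x_\star:=J_{\gamma B}z_\star$, and, following the proof of Lemma~\ref{lem:fixed-piont}, record $b_\star:=\gamma^{-1}(z_\star-x_\star)\in Bx_\star$, $a_\star:=-b_\star-Cx_\star\in Ax_\star$, with $x_\star=J_{\gamma A}(2x_\star-z_\star-\gamma Cx_\star)$. Introduce $b_{n+1}:=\gamma^{-1}(z_n-x_{n+1})\in Bx_{n+1}$ and $a_{n+1}:=\gamma^{-1}(2x_{n+1}-z_n-\gamma Cx_{n+1}-y_{n+1})\in Ay_{n+1}$, so that $a_{n+1}+b_{n+1}+Cx_{n+1}=\gamma^{-1}(x_{n+1}-y_{n+1})$. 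I would expand $\|z_{n+1}-z_\star\|^2$ via the update for $z_{n+1}$, insert firm nonexpansiveness of $J_{\gamma A}$, insert firm nonexpansiveness of $J_{\gamma B}$ --- sharpened to the $\tfrac{1}{2(1+\kappa/\gamma)}$-averaged estimate of Lemma~\ref{l:2} in case \textup{(i)} --- and the monotonicity inequalities for $A$, $B$, $C$. The only terms without an obviously favourable sign are cross-terms of the form $\gamma\scal{\cdot}{Cx_{n+1}-Cx_\star}$ and $\gamma\scal{\cdot}{Cy_{n+1}-Cx_{n+1}}$, which I would estimate by Young's inequality together with the $\mu$-Lipschitz continuity of $C$. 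The goal is an inequality
\[
\|z_{n+1}-z_\star\|^2\le\|z_n-z_\star\|^2-\varepsilon\,\|x_{n+1}-y_{n+1}\|^2
\]
with $\varepsilon>0$, with in case \textup{(i)} an additional term $-\varepsilon'\|b_{n+1}-b_\star\|^2$, $\varepsilon'>0$, retained on the right. In case \textup{(i)}, the surplus $\tfrac{2\kappa}{\gamma}\|\gamma b_{n+1}-\gamma b_\star\|^2$ provided by cocoercivity of $B$ through Lemma~\ref{l:2} is precisely what absorbs the $C$-cross-terms, and the stated range $0<\gamma<\mu^{-1}/\sqrt{1+\gamma/(2\kappa)}$ is exactly the condition making $\varepsilon>0$ ($0<\gamma<\min\{\kappa,\mu^{-1}\sqrt{2/3}\}$ being a clean sufficient condition). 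In case \textup{(ii)}, $J_{\gamma B}=J_{\gamma N_V}=P_V$ is linear, so $x_{n+1},y_{n+1}\in V$, the $B$-residual $\gamma b_{n+1}=z_n-x_{n+1}=P_{V^\perp}z_n$ lies in $V^\perp$, while $Cx=P_VC_1P_Vx\in V$; these orthogonalities annihilate the bad cross-terms, and firm nonexpansiveness of $J_{\gamma A}$ with $\gamma\mu<1$ suffices for $\varepsilon>0$. I expect this step to be the main obstacle, since $\Id-\gamma C$ is itself expansive, so the averagedness of $T$ is delicate and genuinely hinges on \textup{(i)} or \textup{(ii)}.

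\textbf{Step 2 (Fej\'er monotonicity, asymptotic regularity, weak convergence of $z_n$).} The estimate of Step~1 shows $(z_n)$ is Fej\'er monotone relative to $\Fix(T)$, hence bounded, and that $\sum_n\|x_{n+1}-y_{n+1}\|^2<\infty$, so $x_{n+1}-y_{n+1}\to0$. Since $z_{n+1}-z_n=(y_{n+1}-x_{n+1})-\gamma(Cy_{n+1}-Cx_{n+1})$ and $\gamma\mu<1$, the $\mu$-Lipschitz bound gives $(1-\gamma\mu)\|x_{n+1}-y_{n+1}\|\le\|z_{n+1}-z_n\|\le(1+\gamma\mu)\|x_{n+1}-y_{n+1}\|$, whence $z_{n+1}-z_n\to0$ and $(x_n),(y_n),(a_n),(b_n)$ are bounded. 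As $T$ is $\theta$-averaged with $\Fix(T)\neq\emp$, the classical convergence theorem for Krasnoselskii--Mann iterations (e.g.\ \cite{livre1}) yields $z_n\weakly z_\star$ for some $z_\star\in\Fix(T)$; in case \textup{(i)} the extra term from Step~1 additionally gives $\sum_n\|b_{n+1}-b_\star\|^2<\infty$, hence $b_{n+1}\to b_\star$ strongly, where $b_\star=\gamma^{-1}(z_\star-J_{\gamma B}z_\star)$.

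\textbf{Step 3 (the shadow sequences).} Write $x_\star:=J_{\gamma B}z_\star$. In case \textup{(ii)}, $x_{n+1}=P_Vz_n$ and $P_V$ is weakly continuous, so $x_n\weakly P_Vz_\star=x_\star$ at once. In case \textup{(i)}, let $\bar x$ be any weak cluster point of $(x_n)$, say $x_{n_k+1}\weakly\bar x$; since $(x_{n_k+1},b_{n_k+1})\in\gra B$ with $b_{n_k+1}\to b_\star$ strongly, weak--strong closedness of the graph of the maximal monotone operator $B$ gives $b_\star\in B\bar x$, and passing to the limit in $z_{n_k}=x_{n_k+1}+\gamma b_{n_k+1}$ gives $z_\star=\bar x+\gamma b_\star$, so $\bar x=J_{\gamma B}z_\star=x_\star$; thus $x_n\weakly x_\star$. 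In both cases $y_n\weakly x_\star$ because $x_{n+1}-y_{n+1}\to0$, and $x_\star\in\zer(A+B+C)$ by Lemma~\ref{lem:fixed-piont}. This completes the plan; the only substantial difficulty is Step~1, namely the precise algebra that certifies averagedness of $T$ and isolates exactly why \textup{(i)} and \textup{(ii)} are the right hypotheses.
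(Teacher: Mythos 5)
Your Step 1 is essentially the paper's argument: the key inequality is obtained exactly as you describe, with the cocoercivity surplus $-\tfrac{2\kappa}{\gamma}\|u_{n+1}\|^2$ from Lemma~\ref{l:2} absorbing the cross term $-2\gamma\scal{Cx_{n+1}-Cy_{n+1}}{u_{n+1}}$ via Young's inequality in case (i), and with the orthogonality $Cx_{n+1}-Cy_{n+1}\in V$, $u_{n+1}\in V^{\perp}$ annihilating that cross term in case (ii). The mechanisms and the resulting parameter ranges match the paper.

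The genuine gap is in Step 2. What Step 1 delivers is the inequality $\|z_{n+1}-z_\star\|^2\le\|z_n-z_\star\|^2-\varepsilon\|x_{n+1}-y_{n+1}\|^2$ \emph{only for $z_\star\in\Fix(T)$}, i.e.\ Fej\'er monotonicity (strict quasi-nonexpansiveness of $T$), not $\theta$-averagedness. Averagedness is a two-point property for arbitrary pairs, and it is not clear it holds here: the natural residual $\|(\Id-T)z-(\Id-T)w\|^2$ involves $(\Id-\gamma C)x-(\Id-\gamma C)y$ differences, and the extra $\gamma(Cx-Cy)$ terms are not controlled by $\|(x-y)-(x'-y')\|$ for general pairs. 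So you cannot invoke the Krasnoselskii--Mann convergence theorem, and the weak convergence of $(z_n)$ does not follow from what you have established. The correct completion (and the one the paper uses) is the standard Fej\'er route: show that \emph{every weak sequential cluster point of $(z_n)$ lies in $\Fix(T)$} and then apply the Fej\'er convergence theorem. This requires a demiclosedness argument that your proposal never carries out: along a subsequence $z_{k_n}\weakly\overline{z}$ one must pass to the limit in the inclusion $x_{1+k_n}-z_{k_n}+(x_{1+k_n}-y_{1+k_n})+\gamma(Cy_{1+k_n}-Cx_{1+k_n})\in\gamma(A+C)y_{1+k_n}$, where the left-hand side converges \emph{strongly} (in case (i) because $u_n\to0$ forces $Bx_{n+1}\to Bx_\star$ strongly) while $y_{1+k_n}$ converges only weakly, and then use that $\gra(A+C)$ is sequentially closed in $\HH^{\mathrm{weak}}\times\HH^{\mathrm{strong}}$ ($A+C$ being maximal monotone since $\dom C=\HH$); case (ii) needs the analogous closedness statement for $A+C+N_V$. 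Your Step 3 contains the $B$-side of this reasoning but deploys it circularly, presupposing $z_n\weakly z_\star$ in order to identify the limit of $x_n$; the $A+C$-side, which is what actually certifies that the cluster point is a fixed point of $T$ and a zero of $A+B+C$, is missing.
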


\begin{proof}
Let $z_\star\in\Fix(T)$ and $x_\star :=J_{\gamma B}z_\star$.
Set $u_{n+1} := x_{n+1} - z_n +z_\star-x_\star$ and we have
\begin{alignat*}{2}
z_{n+1} - z_\star 
&= z_n + y_{n+1} - x_{n+1}- \gamma (Cy_{n+1}-Cx_{n+1}) - z_\star\notag\\
&= y_{n+1}-x_\star + \gamma (Cx_{n+1}-Cy_{n+1})-u_{n+1}.
\end{alignat*} 
By 
\begin{align}
\|z_{n+1}-z_\star \|^2 &= \|y_{n+1} - x_\star +\gamma (Cx_{n+1}-Cy_{n+1})\|^2
\nonumber
\\
&\quad-2\scal{y_{n+1} - x_\star +\gamma (Cx_{n+1}-Cy_{n+1})}{u_{n+1}} + \|u_{n+1}\|^2.
\label{e:eq0}
\end{align}
We expand the first term to get
\begin{align}
\label{e:eq1}
&\|y_{n+1} - x_\star +\gamma (Cx_{n+1}-Cy_{n+1})\|^2 \\
&= \| y_{n+1}-x_\star\|^2 + 2\gamma \scal{y_{n+1}-x_\star}{Cx_{n+1}-Cy_{n+1}} + \gamma^2\|Cx_{n+1}-Cy_{n+1} \|^2.\nonumber
\end{align}
Note 
\begin{align*}
2x_{n+1} -z_n - \gamma Cx_{n+1} -y_{n+1}
\in&\gamma Ay_{n+1},\\
x_\star -z_\star- \gamma Cx_\star 
\in&\gamma Ax_\star.
\end{align*}
Since $A$ and $C$  are monotone, we have 
\begin{alignat*}{2}
0& \leq \scal{y_{n+1}-x_\star}{2x_{n+1} -z_n - \gamma Cx_{n+1} -y_{n+1} -x_\star+z_\star+\gamma Cx_\star }\notag\\
&= \scal{y_{n+1}-x_\star}{ x_{n+1} -y_{n+1} -\gamma Cx_{n+1} + \gamma Cx_\star }+ \scal{y_{n+1}-x_\star}{u_{n+1}} \notag\\
&\leq \scal{y_{n+1}-x_\star}{ x_{n+1} -y_{n+1}+ \gamma Cy_{n+1}-\gamma Cx_{n+1}  }+ \scal{y_{n+1}-x_\star}{u_{n+1}},
\end{alignat*}
which implies that 
\begin{align}
&2\gamma\scal{y_{n+1}-x_\star}{Cx_{n+1} - Cy_{n+1}}\notag\\
&\quad\leq 2 \scal{y_{n+1}-x_\star}{x_{n+1}-y_{n+1}} + 2\scal{y_{n+1}-x_\star}{u_{n+1}}\notag\\
&\quad= \|x_{n+1}-x_\star\|^2 - \|y_{n+1}-x_\star\|^2 - \|x_{n+1}-y_{n+1}\|^2 + 2\scal{y_{n+1}-x_\star}{u_{n+1}}.
\label{e:eq2}
\end{align}
Combining \eqref{e:eq1} and \eqref{e:eq2}, we get
\begin{align*}
&\|y_{n+1} - x_\star +\gamma (Cx_{n+1}-Cy_{n+1})\|^2 \\
&\hspace{2cm}\leq \|x_{n+1}-x_\star\|^2 - \|x_{n+1}-y_{n+1}\|^2+  \gamma^2\|Cx_{n+1}-Cy_{n+1} \|^2\\
&\hspace{2cm}\quad+2\scal{y_{n+1}-x_\star}{u_{n+1}}.
\end{align*}
Applying this bound to \eqref{e:eq0}, we get
\begin{align}
\label{eq:3}
\|z_{n+1}-z_\star \|^2& \leq \|x_{n+1}-x_\star\|^2 - \|x_{n+1}-y_{n+1}\|^2 + \gamma^2\|Cx_{n+1}-Cy_{n+1} \|^2 \notag\\ &\quad-2\gamma\scal{Cx_{n+1}-Cy_{n+1}}{u_{n+1}} + \|u_{n+1}\|^2.
\end{align}

(i)
We consider the case where $B$ is $\kappa$-cocoercive.
From \eqref{eq:3}, we get
\begin{alignat}{2}
&\|z_{n+1}-z_\star \|^2\notag\\
&~\leq  \|z_n-z_\star\|^2- \|x_{n+1}-y_{n+1}\|^2+  \gamma^2\|Cx_{n+1}-Cy_{n+1} \|^2 \notag\\
&~\quad-2\gamma\scal{Cx_{n+1}-Cy_{n+1}}{u_{n+1}} -\frac{2\kappa}{\gamma}\|u_{n+1}\|^2\notag\\
 &~\leq
 \|z_n-z_\star \|^2- \|x_{n+1}-y_{n+1}\|^2 +\gamma^2 \left(1 +{\frac{\gamma}{2\kappa(1-\varepsilon')}}\right) \|Cx_{n+1}-C y_{n+1} \|^2 \notag\\
&~\quad -\frac{2\kappa\varepsilon'}{\gamma}\|u_{n+1}\|^2\notag
 \notag\\
&~\leq
 \|z_n-z_\star \|^2- \|x_{n+1}-y_{n+1}\|^2 +\gamma^2 \left(1 +{\frac{\gamma}{2\kappa(1-\varepsilon')}}\right)\mu^2 \|x_{n+1}- y_{n+1} \|^2   \notag\\
&~\quad-\frac{2\kappa\varepsilon'}{\gamma}\|u_{n+1}\|^2\notag\\
&~=  \|z_n-z_\star \|^2- \varepsilon \|x_{n+1}-y_{n+1}\|^2 -\frac{2\kappa\varepsilon'}{\gamma}\|u_{n+1}\|^2,\label{eq:case1_descent}
\end{alignat}
where $0<\varepsilon'<1$.
The first inequality follows from Lemma \ref{l:2},
the second inequality follows from Young's inequality,
the third inequality follows from $\mu$-Lipschitz continuity of $C$,
and the final equality follows from the definition
$\varepsilon:=1-\gamma^2 \left(1 +{\frac{\gamma}{2\kappa(1-\varepsilon')}}\right)\mu^2$.
We choose $\varepsilon'>0$ small enough so that $\varepsilon>0$.


(ii)
If $B= N_V$ and $C = P_V C_1 P_V$, then 
\begin{align*}
&\scal{Cx_{n+1}-Cy_{n+1}}{u_{n+1}} 
\\
 &\quad= \scal{C_1P_V x_{n+1} - C_1 P_V y_{n+1}}{P_V(x_{n+1}-z_n) + P_V(z_\star-x_\star)} = 0.
\end{align*}
Hence, \eqref{eq:3} becomes, 
\begin{align*}
&\|z_{n+1}-z_\star \|^2 \\
&\quad= \|x_{n+1}-x_\star\|^2 - \|x_{n+1}-y_{n+1}\|^2 +  \gamma^2\|Cx_{n+1}-Cy_{n+1} \|^2+ \|u_{n+1}\|^2 \notag\\
&\quad\leq \|z_n-z_\star \|^2 - \|x_{n+1}-y_{n+1}\|^2 +  \gamma^2\|Cx_{n+1}-Cy_{n+1} \|^2\notag\\
&\quad\leq \|z_n-z_\star \|^2 - \|x_{n+1}-y_{n+1}\|^2 +  \gamma^2\mu^2\|x_{n+1}-y_{n+1} \|^2\notag\\
&\quad=\|z_n-z_\star \|^2 - \varepsilon \|x_{n+1}-y_{n+1}\|^2,
\end{align*}
where the first inequality follows from $\|x_{n+1}-x_\star\|^2 \le  \|z_n-z_\star \|^2-\|u_{n+1}\|^2 $, which follows from $(1/2)$-averagedness of $P_V$,
 the second inequality follows from $\mu$-Lipschitz continuity of $C$,
 and the final equality follows from the definition $\varepsilon:=1-\gamma^2\mu^2 >0$. 

In cases (i) and (ii) both, we have 
\begin{equation*}
\label{e:ma1}
(\forall z_\star\in \Fix (T))\quad  \|z_{n+1}-z_\star \|^2 \leq  \|z_n-z_\star \|^2 - \varepsilon \|x_{n+1}-y_{n+1}\|^2
\end{equation*}
with $\varepsilon>0$,
which shows that $(z_n)_{n\in\NN}$ is Fej\'er monotone with respect to $\Fix(T)$ and  
\begin{equation*}
\sum_{n\in\NN} \|x_{n+1}-y_{n+1}\|^2 < +\infty,
\end{equation*}
which implies $x_{n+1}-y_{n+1}\rightarrow 0$.
Let us prove that every weak cluster point of $(z_n)_{n\in\NN}$ is in $\Fix(T)$. Let $\overline{z}$ be a weak cluster point of $(z_n)_{n\in\NN}$, i.e., there exists a subsequence $(z_{k_n})_{n\in\NN}$ 
such that $z_{k_n}\weakly \overline{z}$. Consider two cases: 

(i) We consider the case where $B$ is $\kappa$-cocoercive. From the second negative term in \eqref{eq:case1_descent}, we get
\begin{align*}
\sum_{n\in\NN} \|u_n\|^2 < +\infty& \quad\Longrightarrow\quad
u_n \to 0  
\quad\Longrightarrow \quad
x_{n+1} -z_n \to x_\star-z_\star \\& \quad\Longrightarrow\quad  Bx_{n+1}\to Bx_\star = B J_{\gamma B}z_\star
\end{align*}
where the last implication follows from $x_{n+1}=J_{\gamma B}z_n$, 
$z_{n} -x_{n+1}=\gamma Bx_{n+1}$,
$x_\star = J_{\gamma B}z_\star$,
and
$z_{\star}-x_{\star}=\gamma Bx_\star$.
Since $z_{k_n}\weakly \overline{z}$, we have $x_{1+k_n} \weakly \overline{x} = \overline{z} - \gamma Bx_\star$ and $y_{1+k_n} \weakly \overline{z} -\gamma Bx_\star$.
Since 
$x_{1+k_n}\weakly \overline{x}$ and $Bx_{1+k_n} \to Bx_\star$ and $\gra(B)$ is closed under $\HH^{\text{weak}} \times \HH^{\text{ strong}}$ \cite[Proposition 20.38]{livre1}, we get $Bx_\star = B\overline{x}$. Hence, $\overline{x} = J_{\gamma B}\overline{z}$.
By definition of the FDRF iteration, we have
\begin{equation*}
\underbrace{x_{1+k_n} -z_{k_n}}_{\to -\gamma B \overline{x}}
+ 
\underbrace{x_{1+k_n} -y_{1+k_n}}_{\to 0}  +
\underbrace{\gamma Cy_{1+k_n}-  \gamma Cx_{1+k_n}}_{\to 0}
\in
\gamma A\underbrace{y_{1+k_n}}_{\weakly \overline{x}}+ \gamma C \underbrace{y_{1+k_n}}_{\weakly \overline{x}}.
\end{equation*}
Since $A+C$ is maximal monotone ($A$ and $C$ are maximal monotone with $\dom C=\HH$  \cite[Corollary 25.5]{livre1})
$\gr(A+C)$ is closed under $\HH^{\text{weak}} \times \HH^{\text{ strong}}$ \cite[Proposition 20.38]{livre1} and we get 
\begin{equation*}
-\gamma B \overline{x} \in \gamma A \overline{x} + \gamma C \overline{x},
\end{equation*}
which shows that $\overline{x} \in \zer(A+B+C)$. Furthermore, $\overline{x}-\overline{z} \in \gamma A \overline{x} + \gamma C \overline{x}$ and hence 
$\overline{x} = J_{\gamma A}(2\overline{x} -\overline{z} -\gamma C\overline{x}) = J_{\gamma B}\overline{z}$. Therefore,
\[
(\Id -\gamma C)J_{\gamma A}(2\overline{x} -\overline{z} -\gamma C\overline{x}) + \overline{z} -  (\Id -\gamma C) J_{\gamma B}\overline{z} = \overline{z},
\]
or equivalently $T\overline{z} = \overline{z}$. Hence, $z_n\weakly z_{\star}$ and $x_n \weakly J_{\gamma B}z_{\star}$.
Since we have $x_{n+1}-y_{n+1}\rightarrow 0$, we conclude $y_n\weakly J_{\gamma B}z_{\star}$.

(ii) We consider the case where $B = N_V$. Then, $J_{\gamma B} = P_V$ is weakly continuous and hence $x_{1+k_n} \weakly \overline{x} = P_V \overline{z}$. Then, we have
\begin{alignat*}{2}
p_{k_n} &:= 
\underbrace{x_{1+k_n} -z_{k_n}}_{\weakly\overline{x} - \overline{z}} + 
\underbrace{x_{1+k_n} -y_{1+k_n}}_{\to 0} 
+\underbrace{\gamma Cy_{1+k_n} -  \gamma Cx_{1+k_n}}_{\to 0} \\
&\in\gamma A\underbrace{y_{1+k_n}}_{\weakly \overline{x}}+ \gamma C \underbrace{y_{1+k_n}}_{\weakly \overline{x}}.
\end{alignat*}
Since $A+C$ is maximal monotone, $x_{1+k_n}=P_Vz_{k_n}$, $x_{1+k_n}-y_{1+k_n}\rightarrow 0$, we have
\begin{equation*}
p_{k_n} \weakly \overline{x} - \overline{z}, \quad
P_{V^\perp} y_{1+k_n} \to 0,\quad \text{and}\quad P_{V} p_{k_n} \to 0.
\end{equation*}
From \cite[Example 26.7]{livre1} we have
$\overline{x} \in \zer(A+C+N_V)$ and $\overline{x} - \overline{z} \in (\gamma A+\gamma C)\overline{x}$.
Hence
$\overline{x} = J_{\gamma A}(2\overline{x} -\overline{z} -\gamma C\overline{x}) = J_{\gamma B}\overline{z}$. Therefore,
\[
(\Id -\gamma C)J_{\gamma A}(2\overline{x} -\overline{z} -\gamma C\overline{x}) + \overline{z} -  (\Id -\gamma C) J_{\gamma B}\overline{z} = \overline{z},
\]
or equivalently $T\overline{z} = \overline{z}$. Hence $z_n\weakly \overline{z}$ and $x_n \weakly J_{\gamma B}\overline{z}$.
\qed\end{proof}



Under condition (i), $B$ is single-valued and one can alternatively use FBF \cite{Tseng00} or FBHF \cite{Luis16} by utilizing forward evaluations of $B$ rather than the resolvent $J_{\gamma B}$.
However, many cocoercive operators $B$ require similar computational costs for evaluating $B$ and $J_B$,
and, in such cases, it may be advantageous to use $J_B$ instead of the forward evaluation $B$ \cite{plc17}.

Under condition (ii), the operator $B=N_V$ enforces a linear equality constraint.
Consider
\[
\mbox{find $x\in \HH$ \quad such that \quad }0\in \sum^m_{i=1}A_ix+C_i x,
\]
where $A_1,\dots,A_m$ are maximal monotone and $C_1,\dots,C_m$ are monotone and Lipschitz.
The equivalent formulation
\[
\mbox{find $\mathbf{x}\in \HH^m$ \quad such that \quad }
0\in N_V(\mathbf{x})+\sum^m_{i=1}(A_ix_i+C_ix_i),
\]
where $\mathbf{x}=(x_1,\dots,x_m)$ and $V=\menge{\mathbf{x}\in \HH^m}{x_1=\dots=x_m}$ is the consensus set,
is an important instance of case (ii).
(This problem class is the motivation for Raguet et al.'s forward-Douglas--Rachford \cite{Raguet11,Raguet18}.)
When $B=N_V$ and $V$ is the consensus set, FDRF reduces to Banert's relaxed forward backward, presented in the thesis \cite{Banert12}.
Finally, Brice\~no-Arias's forward--partial inverse--forward \cite{Luis15JOTA} is also applicable under this setup.
Brice\~no-Arias's method is different from our FDRF, but
it can also be considered a ``forward--Douglas--Rachford--forward splitting''
as it reduces to DRS and FBF as special cases.

FDRF resembles forward-Douglas--Rachford (FDR) splitting \cite{Raguet11,Raguet18,Luis15,Davis} but is different due to the correction term $\gamma(Cy_{n+1}-Cx_{n+1})$.
For convergence, FDR requires $C$ to be  cocoercive or (with a slight modification) $B$ to be  strongly monotone \cite[Theorems 1.1 and 1.2]{Davis}.
In contrast, Theorem~\ref{t:1} states that FDRF converges when $B$ is cocoercive.

However, FDRF does not converge in full generality.
The following result establishes that Assumptions \eqref{assump:a1}, \eqref{assump:a2}, and \eqref{assump:a3} are not sufficient to ensure that FDRF converges.
\begin{theorem} \label{t:divergence}
Given any $\gamma> 0$, there exist operators $A$, $B$, and $C$ satisfying Assumptions \eqref{assump:a1}, \eqref{assump:a2}, and \eqref{assump:a3}
such that the FDRF iterates $(z_n)_{n\in\NN}$ and $(x_n)_{n\in\NN}$ diverge.
\end{theorem}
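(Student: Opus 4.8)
The plan is to produce, for each fixed $\gamma>0$, an explicit instance of \eqref{e:prob1} with \emph{linear} operators on $\HH=\RR^2$ for which the fixed-point map $T$ of Lemma~\ref{lem:fixed-piont} is linear and has an eigenvalue of modulus strictly larger than $1$; starting \eqref{e:FDRF} from a corresponding eigenvector then forces $\|z_n\|\to\infty$, and a short extra argument transfers the divergence to $(x_n)_{n\in\NN}$.

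Concretely, I would take $A=0$, $B=N_V$ with $V=\RR\times\{0\}\subseteq\RR^2$ (so $J_{\gamma B}=P_V$, the orthogonal projection onto $V$), and $C=\omega\!\left(\begin{smallmatrix}0&-1\\1&0\end{smallmatrix}\right)$, a quarter-turn rotation scaled by a parameter $\omega>0$ to be chosen. Then \eqref{assump:a1} holds since $0$ and $N_V$ are maximal monotone; \eqref{assump:a2} holds since $C$ is skew-symmetric, hence monotone, and $\omega$-Lipschitz; and \eqref{assump:a3} holds since one checks directly that $\zer(A+B+C)=V\neq\emp$. Because all three operators are linear, $J_{\gamma A}=\Id$ and $J_{\gamma B}=P_V$, and substituting into $T=(\Id-\gamma C)J_{\gamma A}(2J_{\gamma B}-\Id-\gamma C J_{\gamma B})+\Id-(\Id-\gamma C)J_{\gamma B}$ yields, after a routine multiplication of $2\times2$ matrices, the explicit matrix
\[
T=\begin{pmatrix} 1-w^2 & -w\\[2pt] -w & 0\end{pmatrix},\qquad w:=\gamma\omega.
\]
Its characteristic polynomial factors as $(\lambda-1)(\lambda+w^2)$, so the eigenvalues of $T$ are $1$ and $-w^2$; choosing $\omega>1/\gamma$, i.e.\ $w>1$, makes $|{-w^2}|=w^2>1$.

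It then remains to read off divergence. The eigenvalue $1$ corresponds, via Lemma~\ref{lem:fixed-piont}, to $\Fix(T)$ and hence to the solution set, while the $(-w^2)$-eigenspace is spanned by $(w,1)$, which is \emph{not} contained in $\ker P_V=\{0\}\times\RR$. Taking $z_0=(w,1)$ gives $z_n=(-w^2)^n(w,1)$, so $\|z_n\|=w^{2n}\sqrt{1+w^2}\to\infty$; and since $x_{n+1}=J_{\gamma B}z_n=P_Vz_n=(-w^2)^n(w,0)$ we get $\|x_n\|=w^{2n-1}\to\infty$ as well. (The same holds for any $z_0\notin\Fix(T)$, because the $(-w^2)$-component of such a $z_0$ is nonzero and survives $P_V$.) Running this construction for the prescribed $\gamma$ proves the theorem.

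The calculation is elementary, and the only two points needing care are: evaluating $T$ correctly --- since $2J_{\gamma B}-\Id$ is singular when $B=N_V$, the matrix bookkeeping is the natural place for a sign or index error to slip in --- and checking that the divergent mode is not annihilated by $J_{\gamma B}$, which is precisely what upgrades divergence of $(z_n)_{n\in\NN}$ to divergence of $(x_n)_{n\in\NN}$. If one prefers an instance with $A\neq0$, one may take $A$ to be a scaled quarter-turn rotation as well; the analogous computation then produces a $2\times2$ matrix $T$ with eigenvalues $1$ and $-w(w+a)/(1+a^2)$, where $a=\gamma\|A\|$, and this can again be driven to modulus $>1$, in fact even with $\gamma\mu<1$ for a suitable ratio of the two rotation speeds.
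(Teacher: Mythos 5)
Your construction is internally correct: with $A=0$, $B=N_{\RR\times\{0\}}$, $C=\omega\bigl(\begin{smallmatrix}0&-1\\1&0\end{smallmatrix}\bigr)$, the matrix $T=\bigl(\begin{smallmatrix}1-w^2&-w\\-w&0\end{smallmatrix}\bigr)$ with $w=\gamma\omega$, its eigenvalues $1$ and $-w^2$, the eigenvector $(w,1)$, and the transfer of divergence from $(z_n)$ to $(x_n)$ via $P_V$ all check out, and \eqref{assump:a1}--\eqref{assump:a3} hold. So you do prove the theorem as literally stated, and by essentially the same mechanism as the paper (linear skew operators on $\RR^2$, an explicit $2\times2$ matrix $T$ with spectral radius exceeding $1$).

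The substantive weakness is \emph{where} you put the destabilizing rotation. Your example diverges only when $w=\gamma\mu>1$, i.e., $\gamma>1/\mu$ --- a regime where plain FBF is already outside its admissible step-size range and where even Lemma~\ref{lem:fixed-piont}'s hypothesis $\gamma<1/\mu$ fails; your $A\neq 0$ variant improves this only to $\gamma\mu>2/\sqrt{5}$. Consequently your counterexample does \emph{not} rule out a result of the form ``FDRF converges for $\gamma\in]0,\gamma_{\max}(\mu)[$'' with, say, $\gamma_{\max}(\mu)=1/(2\mu)$ --- but ruling out exactly such results is the stated purpose of Theorem~\ref{t:divergence} in the paragraph that follows it. The paper's proof instead places the fast rotation in $B$, which carries no Lipschitz or norm constraint under \eqref{assump:a1}: it takes $J_{\gamma A}=0$, $B=\gamma^{-1}\cot(\omega/2)\bigl(\begin{smallmatrix}0&1\\-1&0\end{smallmatrix}\bigr)$, and $C=\mu\bigl(\begin{smallmatrix}0&1\\-1&0\end{smallmatrix}\bigr)$, which yields spectral radius squared $(\cos(\omega/2)+\gamma\mu\sin(\omega/2))^2=1+\gamma\mu\omega+\mathcal{O}(\omega^2)>1$ for \emph{every} $\gamma>0$ and $\mu>0$ once $\omega$ is small, so the instability persists for arbitrarily small $\gamma\mu$. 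To give the theorem its intended force you should modify your example so that the operator whose norm blows up is $B$ (or $A$), keeping $\gamma\mu$ as a free, arbitrarily small parameter.
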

\begin{proof}
Let $\HH=\RR^2$ and 
let $A$, $B$, and $C$ satisfy
\begin{gather*}
J_{\gamma A}(x,y)=
\begin{bmatrix}
0&0\\
0&0
\end{bmatrix}
\begin{bmatrix}
x\\y
\end{bmatrix},\qquad
B(x,y)=
\begin{bmatrix}
0&\gamma^{-1}\cot(\omega/2)\\
-\gamma^{-1}\cot(\omega/2)&0
\end{bmatrix}
\begin{bmatrix}
x\\y
\end{bmatrix},\\
C(x,y)=
\begin{bmatrix}
0&\mu\\
-\mu&0
\end{bmatrix}
\begin{bmatrix}
x\\y
\end{bmatrix}
,
\end{gather*}
where $\cot$ denotes cotangent and $\omega>0$ is small. 
Then, $A$, $B$, and $C$ are maximally monotone and $\{0\}=\zer(A+B+C)$.
With direct calculations, we get
\[
T(x,y)=
\begin{bmatrix}
\frac{1}{2}(1+\cos(\omega)+\gamma\mu \sin(\omega))&
\frac{1}{2}(\gamma\mu-\gamma\mu\cos(\omega)+ \sin(\omega))\\
\frac{1}{2}(-\gamma\mu+\gamma\mu\cos(\omega)- \sin(\omega))&
\frac{1}{2}(1+\cos(\omega)+\gamma\mu \sin(\omega))
\end{bmatrix}
\begin{bmatrix}
x\\y
\end{bmatrix}.
\]
The $2\times 2$ matrix defining $T$ has eigenvalues
\[
|\lambda_1|^2=|\lambda_2|^2=
(\cos(\omega/2)+\gamma\mu\sin(\omega/2))^2=1+\gamma\mu\omega+\mathcal{O}(\omega^2).
\]
So $|\lambda_1|^2=|\lambda_2|^2>1$ for small enough $\omega$.
Therefore, FDRF with $z_0\ne 0$ diverges in the sense that $\|z_n\|\rightarrow\infty$ and $\|x_n\|\rightarrow\infty$.
\qed\end{proof}

In splitting methods, step size requirements often depend on the assumptions, rather than on the specific operators.
Theorem~\ref{t:divergence} rules out the possibility of proving a result like
``Assuming (A1), (A2), and (A3), FDRF converges for $\gamma\in ]0,\gamma_\mathrm{max}(\mu)[$'', where $\gamma_\mathrm{max}(\mu)$ is some function that depends $\mu$.
However, Theorem~\ref{t:divergence} does not rule out the possibility that one can examine the specific operators $A$, $B$, and $C$ (to gain more information beyond the Lipschitz parameter of $C$) and then select $\gamma>0$ to obtain convergence.

\section{FRB+DR: Convergent in General}
\label{s:FRDR}
To solve Problem~\eqref{e:prob1}, we propose the following iteration
\makeatletter
\tagsleft@true
\makeatother
\begin{align}
\qquad\qquad\qquad
\begin{cases}
x_{n+1}=J_{\gamma B}(x_n-\gamma u_n-\gamma (2Cx_n-Cx_{n-1}))\\
y_{n+1}=J_{\beta A}(2x_{n+1}-x_n+\beta u_n)\\
u_{n+1}=u_n+\frac{1}{\beta}(2x_{n+1}-x_n-y_{n+1})
\end{cases}
\tag{FRDR}\label{eq:mts_drs}
\end{align}
\makeatletter
\tagsleft@false
\makeatother
for $n=0,1,\dots$, where $x_0,x_{-1},u_0\in\HH$ are starting points and $\gamma>0$, $\beta > 0$.
We call this method forward-reflected-Douglas--Rachford (FRDR) splitting
as it combines Malitsky and Tam's FRB \cite{Malitsky18} and Douglas--Rachford \cite{Lions1979}.
Note FRDR evaluates operator $C$ only once per iteration, since the evaluation of $Cx_{n-1}$ from the previous iteration can be reused.
In contrast, FDRF evaluates $C$ twice per iteration.

FRDR reduces to FRB when $A=0$ and to DR when $C=0$ and $\beta=\gamma$.
When $A=0$, we have $J_{\beta A}=\Id$, $u_n=0$, 
and the iteration is independent of $\beta$. FRB converges when $\gamma<1/(2\mu)$ \cite[Theorem 2.5]{Malitsky18}, which is consistent with the parameter range of Theorem~\ref{thm:MTS_DRS} with $\beta\rightarrow\infty$.
When $C=0$ and $\beta=\gamma$, one recovers DR with $z_n=x_n-\gamma u_n$.

Without any further assumptions, the $(x_n)_{n\in\NN}$ sequence of FRDR converges weakly to a solution of \eqref{e:prob1}.
\begin{theorem}
 \label{thm:MTS_DRS}
 Assume \eqref{assump:a1}, \eqref{assump:a2}, \eqref{assump:a3}, $0<\beta$,
 and $0<\gamma<\beta/(1+2\mu\beta)$.
 Then $x_n\weakly x_\star\in \zer(A+B+C)$ for \eqref{eq:mts_drs}.
\end{theorem}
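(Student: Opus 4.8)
The plan is to realize \eqref{eq:mts_drs} as a fixed-point / Fej\'er-monotone iteration with respect to a suitable solution set, following the template of the FRB analysis of Malitsky and Tam combined with the DR structure. First I would rewrite the recursion in terms of the variable $z_n := x_n - \gamma u_n$ (so that $C=0$, $\beta=\gamma$ recovers ordinary DR with iterate $z_n$) and identify the monotone-operator inclusions satisfied at each step: from $x_{n+1}=J_{\gamma B}(x_n-\gamma u_n-\gamma(2Cx_n-Cx_{n-1}))$ one gets $\gamma^{-1}(x_n-\gamma u_n-\gamma(2Cx_n-Cx_{n-1})-x_{n+1})\in Bx_{n+1}$, and from the $y_{n+1}$ update one gets $\beta^{-1}(2x_{n+1}-x_n+\beta u_n - y_{n+1})\in Ay_{n+1}$, while the $u_{n+1}$ update is exactly $u_{n+1}=\beta^{-1}(2x_{n+1}-x_n-y_{n+1})+u_n$, i.e. $y_{n+1}=2x_{n+1}-x_n-\beta(u_{n+1}-u_n)$. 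A fixed point $(x_\star,u_\star)$ (with $x_{n}=x_{n-1}=x_\star$, $u_n=u_\star$) then forces $y_\star=x_\star$, $-u_\star-Cx_\star\in Bx_\star$ and $u_\star\in Ax_\star$, hence $0\in Ax_\star+Bx_\star+Cx_\star$; conversely every solution gives such a fixed point, so the fixed-point set is nonempty by \eqref{assump:a3}.

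The core is a Lyapunov inequality. Fix a solution $x_\star$ with associated $u_\star\in Ax_\star$, $-u_\star-Cx_\star\in Bx_\star$, and set $z_n=x_n-\gamma u_n$, $z_\star=x_\star-\gamma u_\star$. Using monotonicity of $A$ on the pair $(y_{n+1},Ay_{n+1})$ vs.\ $(x_\star,u_\star)$ and of $B$ on $(x_{n+1},Bx_{n+1})$ vs.\ $(x_\star,-u_\star-Cx_\star)$, and monotonicity of $C$ on $(x_{n+1},x_\star)$, one derives — after expanding squares and using the telescoping identity $2\langle a-b,a-c\rangle=\|a-b\|^2+\|a-c\|^2-\|b-c\|^2$ — an estimate of the form
\begin{align*}
\|z_{n+1}-z_\star\|^2 + \gamma\langle Cx_{n+1}-Cx_n, x_{n+1}-x_\star\rangle + a\|x_{n+1}-x_n\|^2
&\le \|z_{n}-z_\star\|^2 + \gamma\langle Cx_{n}-Cx_{n-1}, x_{n}-x_\star\rangle \\
&\quad - b\|x_{n+1}-x_n\|^2 - c\|x_{n+1}-y_{n+1}\|^2,
\end{align*}
where the cross term $\gamma\langle Cx_{n}-Cx_{n-1},x_n-x_\star\rangle$ is the FRB-style anchoring term that must be carried inside the Lyapunov function, and the "reflected" forward term $2Cx_n-Cx_{n-1}$ is split as $Cx_{n+1}+(Cx_n-Cx_{n+1})+(Cx_n-Cx_{n-1})$. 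The crucial quantitative step is to absorb the error terms $\gamma\langle Cx_{n+1}-Cx_n, x_{n+1}-x_\star\rangle$ and the leftover forward increments using $\mu$-Lipschitzness of $C$ together with Young's inequality, and this is exactly where the hypothesis $0<\gamma<\beta/(1+2\mu\beta)$ (equivalently $\gamma\mu < \tfrac12(1-\gamma/\beta)$, which as $\beta\to\infty$ degenerates to the FRB bound $\gamma<1/(2\mu)$) is needed so that the resulting coefficient of $\|x_{n+1}-x_n\|^2$ stays strictly positive. Defining $\phi_n := \|z_n-z_\star\|^2 + \gamma\langle Cx_n-Cx_{n-1},x_n-x_\star\rangle + a\|x_n-x_{n-1}\|^2$ and again bounding the inner product from below by $-\gamma\mu(\text{small})\|x_n-x_{n-1}\|^2 - (\text{small})\|x_n-x_\star\|^2$, one shows $\phi_n\ge 0$ up to a term controlled by $\|x_n-x_\star\|^2\le 2\|z_n-z_\star\|^2+2\gamma^2\|u_n-u_\star\|^2$, and that $\phi_{n+1}\le\phi_n - \delta(\|x_{n+1}-x_n\|^2+\|x_{n+1}-y_{n+1}\|^2)$ for some $\delta>0$.

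From the Lyapunov decrease I would conclude: $(\phi_n)$ is nonincreasing and bounded below, hence convergent; $\sum_n(\|x_{n+1}-x_n\|^2+\|x_{n+1}-y_{n+1}\|^2)<\infty$, so $x_{n+1}-x_n\to0$ and $x_{n+1}-y_{n+1}\to0$; boundedness of $\phi_n$ gives boundedness of $(z_n)$ and then (again via $x_{n+1}=J_{\gamma B}(\cdots)$ and $x_{n+1}-x_n\to0$, $Cx_n-Cx_{n-1}\to0$) boundedness of $(x_n)$, $(u_n)$, $(y_n)$; equivalently $(z_n,\gamma u_n)$ is Fej\'er monotone with respect to the solution set in a suitable product sense. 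Then I invoke Opial's lemma: take a weak cluster point; pass to the limit in the two inclusions $\gamma^{-1}(z_n-\gamma(2Cx_n-Cx_{n-1})-x_{n+1})\in Bx_{n+1}$ and $\beta^{-1}(2x_{n+1}-x_n-y_{n+1})+u_n\in Ay_{n+1}$ using demiclosedness of the graphs of the maximal monotone operators $A$ and $B$ under weak-strong convergence (\cite[Proposition 20.38]{livre1}), the strong convergence of the increments to $0$, and Lipschitz continuity of $C$, to identify the cluster point as a fixed point / solution; uniqueness of the weak cluster point follows because $\phi_n$ (being asymptotically equal to $\|z_n-z_\star\|^2+\gamma^2\cdot(\text{controlled}) $) converges for every solution $x_\star$, giving $x_n\weakly x_\star\in\zer(A+B+C)$.

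\textbf{Main obstacle.} The delicate point is bookkeeping the two coupled "momentum" terms — the DR displacement $z_n-z_\star$ and the FRB anchoring term $\gamma\langle Cx_n-Cx_{n-1},x_n-x_\star\rangle$ — simultaneously: one must choose the Young's-inequality weights so that (a) the composite Lyapunov function $\phi_n$ is genuinely nonnegative (modulo a term dominated by $\|z_n-z_\star\|^2$) despite the indefinite inner-product term, and (b) the per-step decrease coefficient on $\|x_{n+1}-x_n\|^2$ is strictly positive, and verifying that both are feasible under precisely the stated range $0<\gamma<\beta/(1+2\mu\beta)$ rather than a strictly smaller one. The appearance of the $1/\beta$ in the $u$-update (versus $1/\gamma$ elsewhere) means the two "resolvent step sizes" differ, so the cancellations are not as clean as in plain DR and the algebra must be done carefully.
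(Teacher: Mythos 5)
Your plan follows essentially the same route as the paper's proof: an FRB-style Lyapunov function consisting of a squared distance to a solution, a squared-increment term, and the anchoring term $\scal{Cx_n-Cx_{n-1}}{x_\star-x_n}$; a one-step decrease obtained from monotonicity of $A$, $B$, $C$ plus Young's inequality and Lipschitz continuity; and the standard endgame (summability of increments, demiclosedness to identify weak cluster points, Opial-type uniqueness). Your identification of the inclusions, the fixed-point characterization, and the observation that $0<\gamma<\beta/(1+2\mu\beta)$ is equivalent to $\gamma\mu<\tfrac12(1-\gamma/\beta)$ and degenerates to the FRB bound as $\beta\to\infty$ are all correct and consistent with the paper.

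There is, however, one concrete gap, and it sits exactly at the point you flag as the "main obstacle": the quadratic part of your Lyapunov function is $\|z_n-z_\star\|^2$ with $z_n=x_n-\gamma u_n$, and this is too weak. To lower-bound $\phi_n$ you must absorb the indefinite anchoring term, which after Young's inequality costs $-\gamma\mu\,\eta^{-1}\|x_n-x_\star\|^2$ for some $\eta>0$; absorbing it requires a strictly positive multiple of $\|x_n-x_\star\|^2$ inside $\phi_n$. But $\|z_n-z_\star\|^2=\|x_n-x_\star-\gamma(u_n-u_\star)\|^2$ dominates no such multiple: it can vanish while $\|x_n-x_\star\|$ is arbitrarily large (compensated by $u_n-u_\star$). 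Your proposed patch, bounding the deficit by $\|x_n-x_\star\|^2\le 2\|z_n-z_\star\|^2+2\gamma^2\|u_n-u_\star\|^2$, does not close the loop because $\|u_n-u_\star\|^2$ is itself uncontrolled by $\phi_n$; the same defect blocks the later claim that boundedness of $(z_n)$ yields boundedness of $(x_n)$ and $(u_n)$ (the argument of $J_{\gamma B}$ contains $Cx_n$, so that route is circular). The paper's fix is to measure distances in the product norm $\|(x,u)\|^2_{\HH\times\HH}=(1/\gamma)\|x\|^2-2\scal{x}{u}+\beta\|u\|^2=(1/\gamma)\|x-\gamma u\|^2+(\beta-\gamma)\|u\|^2$, a genuine norm precisely because $\gamma<\beta$. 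By Young's inequality this norm dominates $\bigl(\tfrac1\gamma-\tfrac1\beta\bigr)\|x\|^2$, and that surplus is exactly what absorbs the $-\mu\|x_n-x_\star\|^2$ and $-\mu\|x_n-x_{n-1}\|^2$ deficits under the condition $\tfrac{1}{2\gamma}-\tfrac{1}{2\beta}-\mu\ge 0$, i.e.\ $\gamma\le\beta/(1+2\mu\beta)$ --- which is where the stated step-size range actually enters. So your outline is repairable by replacing $\|z_n-z_\star\|^2$ with this product-space norm (equivalently, adding $(\beta-\gamma)\|u_n-u_\star\|^2$ and the corresponding increment terms), but as written the nonnegativity and boundedness steps fail.
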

\begin{proof}
Consider the Hilbert space $\HH\times\HH$ equipped with an alternative inner product and norm
\begin{align*}
\scal{(x,u)}{(y,v)}_{\HH\times\HH}&:=(1/\gamma)\scal{ x}{y}-\scal{x}{v}-\scal{ y}{u}+\beta \scal{u}{v},\\
\|(x,u)\|^2_{\HH\times\HH}&:=(1/\gamma)\|x\|^2-2\scal{x}{u}+\beta \|u\|^2.
\end{align*}
Since $\gamma<\beta$, the inner product and norm are valid.
Let $x_\star\in \zer(A+B+C)$, $u_\star\in Ax_\star$, and $-u_\star\in (B+C)x_\star$.

Define 
\begin{align*}
\tilde{A}y_{n+1}&:=u_n+\frac{1}{\beta}(2x_{n+1}-x_n-y_{n+1}),\\
\tilde{B}x_{n+1}&:=\frac{1}{\gamma}(x_n-x_{n+1})- u_n- 2Cx_n+Cx_{n-1},
\end{align*}
$\tilde{A}x_\star:=u_\star$, and $\tilde{B}x_\star:=-u_\star-Cx_\star$
so that
$\tilde{A}y_{n+1}\in Ay_{n+1}$,
$\tilde{B}x_{n+1}\in Bx_{n+1}$,
$\tilde{A}x_\star\in Ax_\star$, and
$\tilde{B}x_\star\in Bx_\star$.
Define
\begin{align*}
V_{n}&:=\|(x_n,u_n)-(x_\star,u_\star)\|^2_{\HH\times\HH}+\frac{1}{2}\|(x_n,u_n)-(x_{n-1},u_{n-1})\|^2_{\HH\times\HH}\\
&\qquad+2\scal{ Cx_{n}-Cx_{n-1}}{x_\star-x_{n}}\\
S_{n}&:=
\frac{1}{2}\|(x_n,u_n)-(x_{n+1},u_{n+1})\|^2_{\HH\times\HH}
+\frac{1}{2}\|(x_n,u_n)-(x_{n-1},u_{n-1})\|^2_{\HH\times\HH}.
\end{align*}

We have
\begin{align*}
&\|(x_{n+1},u_{n+1})-(x_{\star},u_{\star})\|^2_{\HH\times\HH}\\
&\qquad=
\|(x_{n},u_{n})-(x_{\star},u_{\star})\|^2_{\HH\times\HH}
-
\|(x_{n+1},u_{n+1})-(x_{n},u_{n})\|^2_{\HH\times\HH}\\
&\qquad\quad-
2\scal{\tilde{B}x_{n+1}-\tilde{B}x_\star}{x_{n+1}-x_\star}-
2\scal{ \tilde{A}y_{n+1}-\tilde{A}x_\star}{y_{n+1}-x_\star}\\
&\qquad\quad
+2\scal{ Cx_{n}-Cx_{n-1}}{x_{n}-x_{n+1}}
-2\scal{Cx_{\star}-Cx_{n}}{x_{\star}-x_{n+1}}\\
&\qquad\quad
+2\scal{Cx_{n}-Cx_{n-1}}{x_{\star}-x_{n}}\\
&\qquad
\stackrel{\text{(a)}}{\le}
\|(x_{n},u_{n})-(x_{\star},u_{\star})\|^2_{\HH\times\HH}
-
\|(x_{n+1},u_{n+1})-(x_{n},u_{n})\|^2_{\HH\times\HH}\\
&\qquad\quad
+2\scal{ Cx_{n}-Cx_{n-1}}{x_{n}-x_{n+1}}
-2\scal{Cx_{\star}-Cx_{n}}{x_{\star}-x_{n+1}}\\
&\qquad\quad
+2\scal{Cx_{n}-Cx_{n-1}}{x_{\star}-x_{n}}\\
&\qquad
\stackrel{\text{(b)}}{\le}
\|(x_{n},u_{n})-(x_{\star},u_{\star})\|^2_{\HH\times\HH}
-
\|(x_{n+1},u_{n+1})-(x_{n},u_{n})\|^2_{\HH\times\HH}\\
&\qquad\quad
+L\|x_{n}-x_{n-1}\|^2+L\|x_{n+1}-x_{n}\|^2
-2\scal{Cx_{n+1}-Cx_{n}}{x_{\star}-x_{n+1}}\\
&\qquad\quad
+2\scal{Cx_{n}-Cx_{n-1}}{x_{\star}-x_{n}}.
\end{align*}
Inequality (a) follows from monotonicity of $A$ and $B$.
Inequality (b) follows from 
\begin{align*}
2\scal{ Cx_{n}-Cx_{n-1}}{x_{n}-x_{n+1}}
&\le 
\frac{1}{\mu}
\|Cx_{n}-Cx_{n-1}\|^2+\mu\|x_{n}-x_{n+1}\|^2\\
&\le
\mu\left(\|x_{n}-x_{n+1}\|^2+\|x_{n}-x_{n-1}\|^2\right),
\end{align*}
which follows Young's inequality and Lipschitz continuity of $C$, and
\[
-2\scal{Cx_{\star}-Cx_{n}}{x_{\star}-x_{n+1}}
\le
-2\scal{Cx_{n+1}-Cx_{n}}{x_{\star}-x_{n+1}}
\]
which follows from monotonicity of $C$.
Reorganizing, we get
\begin{align*}
&V_{n+1}\\
&\le 
V_{n}-\frac{1}{2}
\left(
\|(x_{n+1},u_{n+1})-(x_{n},u_{n})\|^2_{\HH\times\HH}
+\|(x_{n},u_{n})-(x_{n-1},u_{n-1})\|^2_{\HH\times\HH}
\right)\\
&\quad+L\left(\|x_{n}-x_{n-1}\|^2+\|x_{n+1}-x_{n}\|^2\right)
\end{align*}
we now add
\begin{align*}
&\frac{\beta\gamma \mu}{\beta-\gamma}
\left(
\frac{1}{\beta}\|x_{n}-x_{n+1}\|^2-2\scal{ x_{n}-x_{n+1}}{u_{n}-u_{n+1}}+\beta\|u_{n}-u_{n+1}\|^2\right)\ge 0\\
&\frac{\beta\gamma \mu}{\beta-\gamma}
\left(\frac{1}{\beta}\|x_{n}-x_{n-1}\|^2-2\scal{x_{n}-x_{n-1}}{u_{n}-u_{n-1}}+\beta\|u_{n}-u_{n-1}\|^2
\right)\ge 0
\end{align*}
to the right-hand-side (nonnegativity follows from Young's inequality) to get
\begin{align}
V_{n+1}
&\le 
V_{n}-\frac{\beta-\gamma-2\mu\gamma\beta}{2(\beta-\gamma)}S_{n}.
\label{eq:thm2_key}
\end{align}

Using the telescoping sum argument with \eqref{eq:thm2_key}, we get
\begin{align*}
V_0-\frac{\beta-\gamma-2\mu\gamma\beta}{2(\beta-\gamma)}\sum^n_{i=0}S_i\ge V_n
\end{align*}
Next, we have
\begin{align*}
V_{n}
&
\stackrel{\text{(a)}}{\ge}
\frac{1}{\gamma}\|x_{n}-x_\star\|^2-2\scal{x_{n}-x_\star}{u_{n}-u_\star}+\beta\|u_{n}-u_\star\|^2\\
&\qquad+
\frac{1}{2\gamma}\|x_{n}-x_{n-1}\|^2-\scal{x_{n}-x_{n-1}}{u_{n}-u_{n-1}}+\frac{\beta}{2}\|u_{n}-u_{n-1}\|^2\\
&\qquad-
\mu\left( \| x_{n}-x_{n-1}\|^2+\|x_\star-x_{n}\|^2\right)\\
&\stackrel{\text{(b)}}{\ge}\frac{1}{2}\|(x_n,u_n)-(x_\star,u_\star)\|^2_{\HH\times\HH}\\
&\qquad+
\left(\frac{1}{2\gamma}-\frac{1}{2\beta}-\mu\right)\|x_{n}-x_{\star}\|^2
+
\left(\frac{1}{2\gamma}-\frac{1}{2\beta}-\mu\right)\|x_{n}-x_{n-1}\|^2\\
&\stackrel{\text{(c)}}{\ge} \frac{1}{2}\|(x_n,u_n)-(x_\star,u_\star)\|^2_{\HH\times\HH}.
\end{align*}
Inequality (a) follows from
\begin{align*}
2\scal{ Cx_{n}-Cx_{n-1}}{x_\star-x_{n}}
&\le 
\frac{1}{\mu}
\|Cx_{n}-Cx_{n-1}\|^2+\mu\|x_\star-x_{n}\|^2\\
&\le
\mu\left(\|x_{n}-x_{n-1}\|^2+\|x_\star-x_{n}\|^2\right),
\end{align*}
which follows Young's inequality and Lipschitz continuity of $C$,
inequality (b) follows from
\begin{align*}
\frac{\beta}{2}\|u_{n}-u_\star\|^2-\scal{x_{n}-x_\star}{u_{n}-u_\star}
&\ge 
-\frac{1}{2\beta}
\|x_{n}-x_\star\|^2\\
\frac{\beta}{2}
\|u_{n}-u_{n-1}\|^2-\scal{x_{n}-x_{n-1}}{u_{n}-u_{n-1}}
&\ge 
-\frac{1}{2\beta}
\|x_{n}-x_{n-1}\|^2,
\end{align*}
Young's inequality,
and inequality (c) follows from $\gamma<\beta/(1+2\mu\beta)$.
Putting these together, we have
\[
V_0-\frac{\beta-\gamma-2\mu\gamma\beta}{2(\beta-\gamma)}\sum^n_{i=0}S_i\ge 
\frac{1}{2}\|(x_n,u_n)-(x_\star,u_\star)\|^2_{\HH\times\HH}.
\]
This implies that the sequence $\left((x_n,u_n)\right)_{n\in \NN}$ is bounded and $S_n\rightarrow 0$.
Since
\begin{align*}
S_n&\ge
\frac{1}{2}\|(x_{n+1},u_{n+1})-(x_n,u_n)\|^2_{\HH\times\HH},
\end{align*}
$S_n\rightarrow 0$ implies $x_{n+1}-x_n\rightarrow 0$ and $u_{n+1}-u_n\rightarrow 0$.
Since
\[
u_{n+1}-u_n=(1/\beta)(2x_{n+1}-x_n-y_{n+1})
\]
we also have $x_{n+1}-y_{n+1}\rightarrow 0$.

Now consider a weakly convergent subsequence
$\left((x_{k_n},u_{k_n})\right)_{n\in \NN}$ such that $(x_{k_n},u_{k_n})\weakly (\overline{x},\overline{u})$.
Note that $x_{n+1}$ and $y_{n+1}$ are defined by the inclusion 
\begin{align*}
\begin{bmatrix}
\frac{1}{\gamma}\left(x_n-x_{n+1}\right)+2Cx_n-Cx_{n-1}-Cx_{n+1}\\
\frac{1}{\beta}\left(2x_{n+1}-y_{n+1}-x_{n}\right)
\end{bmatrix}
\in
\begin{bmatrix}
(B+C)x_{n+1}+u_n\\
Ay_{n+1}-u_n
\end{bmatrix}.
\end{align*}
The right-hand side is a maximal monotone operator on $\HH\times\HH$ (equipped with the usual inner product) \cite[Propositions 20.23, 20.38]{livre1}
and the left-hand side strongly converges to $0$ since $C$ is continuous.
Since $\gra(B+C)$ is closed under $\HH^{\text{weak}} \times \HH^{\text{ strong}}$ \cite[Proposition 20.38]{livre1}, we have
\begin{align*}
-\overline{u}&\in (B+C)\overline{x},\qquad
\overline{u}\in A\overline{x}.
\end{align*}
Adding these we also get
$\overline{x}\in \zer(A+B+C)$.
Finally, 
since $(V_n)_{n\in \NN}$ is a monotonically decreasing nonnegative sequence, it has a limit.
Since $C$ is continuous, $(x_n)_{n\in\NN}$ and  $(u_n)_{n\in\NN}$ are bounded sequences, and  $x_{n}-x_{n-1}\rightarrow 0$ and  $u_{n}-u_{n-1}\rightarrow 0$,
we have
\[
\lim_{n\rightarrow\infty}V_n=\lim_{n\rightarrow\infty}\|(x_n,u_n)-(x_\star,u_\star)\|^2_{\HH\times\HH}.
\]
By plugging in $(x_\star,u_\star)=(\overline{x},\overline{u})$,  we conclude that the entire sequence weakly converges to $(\overline{x},\overline{u})$.
\qed\end{proof}

The proof of Theorem~\ref{thm:MTS_DRS} closely follows Malitsky and Tam's analysis \cite[Lemma 2.4]{Malitsky18}.
In fact, FRDR can be thought of as an instance FRB on a primal-dual system with an auxiliary metric.
Naively translating Malitsky and Tam's convergence analysis via a change of coordinates leads to a step size requirement of
$\mu<(\beta-\gamma)/(1+\gamma\beta+\sqrt{(1+\gamma\beta)^2-4\gamma(\beta-\gamma)})$ and $0<\gamma<\beta$.
With a direct analysis, we obtain the better (and simpler) requirement of $\mu<(\beta-\gamma)/(2\beta\gamma)$.

The discovery of this proof was computer-assisted in the sense that we used the performance estimation problem (PEP) \cite{drori2014,Taylor2017,Ryu2018_pep} and a computer algebra system (CAS).
We briefly describe the strategy here.

The proof of Theorem~\ref{thm:MTS_DRS} crucially relies on finding the Lyapunov function $V_n$ and showing $V_{n+1}\le V_n-S_n$.
For a fixed numerical value of $\beta$, $\gamma$, and $\mu$, roughly speaking, the PEP allows us to pose a semidefinite program (SDP) which solution indicates whether a candidate Lyapunov function is nonincreasing.
(A proof establishing a Lyapunov function is nonincreasing is a nonnegative combination of known inequalities, and the SDP automates the search.)
We used the SDP to quickly experiment with many candidate Lyapunov functions and identified that the $V_n$ used in the proof of Theorem~\ref{thm:MTS_DRS} is a nonincreasing quantity.

To get a general proof, we numerically solved the SDP for many values of $\beta$, $\gamma$, and $\mu$ and deduced the general symbolic solution.
(The general proof is equivalent to a general solution of the SDP that symbolically depends on $\beta$, $\gamma$, and $\mu$.)
We relied on a CAS, Mathematica, to work through the symbolic calculations.
In deducing the symbolic form of the proof, we utilized the observed structure of the solution. For example, the optimal SDP matrices were rank deficient, so we set the determinant of the corresponding symbolic matrix to $0$ and eliminated a degree of freedom.

Finally, we translated the symbolic calculations into a traditional proof that is verifiable by humans without the aid of computer software.
This step involved some further simplifications, such as replacing the identity
\[
2\scal{u}{v}= \eta\|u\|^2+\frac{1}{\eta}\|v\|^2-\|\sqrt{\eta}u-(1/\sqrt{\eta})v\|^2
\]
with Young's inequality
\[
2\scal{u}{v}\le \eta\|u\|^2+\frac{1}{\eta}\|v\|^2.
\]

\section{Comparison with Other Methods}
\label{s:comparison}
We now quickly examine other existing methods applied to Problem \eqref{e:prob1} to see how they differ from FDRF and FRDR.
We leave the comparison of these methods, in terms of their computational effectiveness, as a direction of future work.
Note that Problem \eqref{e:prob1} can be reformulated into the primal dual system
\begin{equation}\label{e:Refor1}
\mbox{find $x,u\in \HH$ \quad such that \quad }
    \begin{bmatrix}
    0\\
    0
    \end{bmatrix}
    \in 
    \begin{bmatrix}
    Bx    \\A^{-1}u
    \end{bmatrix}
    +\begin{bmatrix}
    C & \Id\\
    -\Id & 0
    \end{bmatrix}
     \begin{bmatrix}
    x\\
    u
    \end{bmatrix}.
\end{equation}

\paragraph{Combettes--Pesquet.}
The method of \cite{plc6} can be thought of as FBF applied to the primal-dual system \eqref{e:Refor1}:
\begin{equation*}
    \begin{cases}
    \overline{x}_{n+1} = J_{\gamma B}(x_n -\gamma (Cx_n+ u_n))\\
    y_{n+1} = J_{\gamma^{-1}A}(x_n+\gamma^{-1}u_n)\\
    x_{n+1} = \overline{x}_{n+1} - \gamma (C\overline{x}_{n+1}- Cx_n)-\gamma^2(x_n-y_{n+1})\\
    u_{n+1} = u_n+\gamma (\overline{x}_{n+1}-y_{n+1}).
    \end{cases}
\end{equation*}
This method solves Problem \eqref{e:prob1} with an appropriate choice of $\gamma>0$.
This method does not reduce to DR when $C=0$.

\paragraph{Malitsky--Tam.}
We can plainly applying FRB \cite{Malitsky18} to \eqref{e:Refor1}:
\begin{equation*}
    \begin{cases}
    x_{n+1} = J_{\gamma B}(x_n -\gamma (2Cx_n - Cx_{n-1} + 2u_n -u_{n-1}))\\
    y_{n+1} = J_{\gamma^{-1} A}(2x_n - x_{n-1}+\gamma^{-1}u_n)\\
    u_{n+1} = u_n+\gamma(2x_n-x_{n-1}-y_{n+1}).
    \end{cases}
\end{equation*}
This method solves Problem \eqref{e:prob1} with an appropriate choice of $\gamma>0$.
This method does not reduce to DR when $C=0$.


\paragraph{Brice\~no-Arias.}
When $B=N_V$ and $V\subset\HH$ is a closed vector space, i.e., in case (ii) of Theorem~\ref{t:1},
 forward--partial inverse--forward \cite{Luis15JOTA} applies:
\begin{align*}
    \begin{cases}
x_{n+1}=J_{\gamma A}(z_n-\gamma J_{\gamma B}CJ_{\gamma B}z_n)\\
y_{n+1}=J_{\gamma B}(2x_{n+1}-z_n+\gamma J_{\gamma B}CJ_{\gamma B}z_n)-x_{n+1}+z_n-\gamma J_{\gamma B}CJ_{\gamma B}z_n\\
z_{n+1}=y_{n+1}-\gamma (J_{\gamma B}CJ_{\gamma B}y_{n+1}-J_{\gamma B}CJ_{\gamma B}z_{n+1}).
    \end{cases}
\end{align*}
This method reduces to DRS when $C=0$ and to FBF when $B=0$.
However, this method does not apply in the general setup when $B$ is not a normal cone operator.

\paragraph{Johnstone--Eckstein.}
The method of \cite{johnstone2018,johnstone2018b} is based on the notion of projective splitting and bears little resemblance the other methods.
The method is very flexible, and there are multiple instances applicable to Problem \eqref{e:prob1}.
The following instance follows the presentation of \cite{johnstone2018}:
\begin{align*}
    \begin{cases}
    x_{n+1}^A=J_{\gamma A}(z_n+\gamma w_n^A)\\
    x_{n+1}^B=J_{\gamma B}(z_n+\gamma w_n^B)\\
    x_{n+1}^C=z_n-\gamma(Cz_n- w_n^C)\\
    z_{n+1}=z_n-\frac{\alpha_n}{\gamma^2}\left(
    3z_n-x^A_{n+1}-x^B_{n+1}-\gamma Cx_{n+1}^C+\gamma(w_n^A+w_n^B+w_n^C)\right)\\
    w_{n+1}^A=w_{n}^A-\alpha_n(x_{n+1}^A-x_n^C)\\
    w_{n+1}^B=w_{n}^B-\alpha_n(x_{n+1}^B-x_n^C)\\
    w_{n+1}^C=-w_{n+1}^A-w_{n+1}^B.
    \end{cases}
\end{align*}
The scalar parameter $\alpha_n$ is computed each iteration with a formula given in \cite{johnstone2018}.
This method does not reduce to DR or FBF.

\section{Conclusions}
In this paper, we considered the monotone inclusion problem with a sum of 3 operators, in which 2 are monotone and 1 is monotone-Lipschitz,
and studied combinations of methods type ``forward-Douglas-Rachford-forward''.
We presented FDRF, a combination of DR and FBF, and showed that it converges with further assumptions, but not generally.
We then presented FRDR, a combination of DR and FRB, and showed that it converges in general.
Moreover, FRDR has a lower computational cost per iteration since it evaluates the monotone-Lipschitz operator only once per iteration.
Therefore, we conclude FRDR to be the better forward-Douglas-Rachford-forward method.

\begin{acknowledgements}
Ernest Ryu was partially supported by AFOSR MURI FA9550-18-1-0502, NSF Grant DMS-1720237, and ONR Grant N000141712162.
B$\grave{\text{\u{a}}}$ng C\^ong V\~u's research work was funded by Vietnam National Foundation for Science and Technology Development (NAFOSTED) under  Grant No. 102.01-2017.05.
\end{acknowledgements}

\end{document}